\newtheorem{theorem}{Theorem}
\newtheorem{lemma}{Lemma}
\newtheorem{conjecture}{Conjecture}
\theoremstyle{definition}
\newtheorem*{ack}{Acknowledgements}
\renewcommand{\d}{\mathrm{d}}
\renewcommand{\phi}{\varphi}
\newcommand{\PP}{\mathbf{P}}
\newcommand{\ZZ}{\mathbf{Z}}
\newcommand{\NN}{\mathbf{N}}
\newcommand{\QQ}{\mathbf{Q}}
\newcommand{\RR}{\mathbf{R}}
\renewcommand{\leq}{\leqslant}
\renewcommand{\geq}{\geqslant}
\newcommand{\ma}{\mathbf}
\newcommand{\x}{\mathbf{x}}
\newcommand{\y}{\mathbf{y}}
\renewcommand{\k}{\mathbf{k}}
\newcommand{\ve}{\varepsilon}
\DeclareMathOperator{\Pic}{Pic}
\DeclareMathOperator{\Mod}{mod} 
\renewcommand{\bmod}[1]{\,(\Mod{#1})}
\DeclareMathOperator{\Val}{Val}
\newcommand{\Q}{{\mathbf{Q}}}
\newcommand{\R}{{\mathbf{R}}}
\newcommand{\Z}{{\mathbf{Z}}}
\newcommand{\N}{{\mathbf{N}}}
\begin{document}
\title{Sums of three squareful numbers}

\author{T.D.  Browning}
\address{School of Mathematics\\
University of Bristol\\ Bristol\\ BS8 1TW\\ United Kingdom}
\email{t.d.browning@bristol.ac.uk}

\author{K. Van Valckenborgh}
\address{Department of Mathematics\\ K.U. Leuven\\ Celestijnenlaan
  200B\\ 3001 Leuven\\ Belgium} 
\email{karl.vanvalckenborgh@wis.kuleuven.be}

\date{\today}
\subjclass{11D45 (11P05, 14G05)}

\begin{abstract}
We investigate the frequency  of positive squareful numbers $x,y,z\leq B$ 
for which $x+y=z$ and present a conjecture concerning its asymptotic behaviour.
\end{abstract}
\maketitle

\section{Introduction}

In this paper we examine the quantitative arithmetic of integral
points on certain Campana orbifolds, following the discussions of Abramovich
 \cite{abramovich}, Campana
\cite{Campana} and Poonen \cite{poonen}.
Given rational points $p_i=r_i/s_i \in \PP^1(\QQ)$
with integer multiplicities $m_i\geq 2$, for $1\leq i\leq n$,  we define the divisor 
$
\Delta=\sum_i(1-\frac{1}{m_i})[p_i].
$
The pair $(\PP^1,\Delta)$  defines an orbifold curve in the sense of
Campana and has associated Euler characteristic 
$$
\chi=\chi(\PP^1)-\deg
\Delta=2-n+\frac{1}{m_1}+\cdots + \frac{1}{m_n}.
$$
A point $r/s\in \PP^1(\QQ)$  is said to be integral 
if $rs_i-sr_i$ is $m_i$-powerful for $1\leq i \leq n$.
Here we recall that an integer $k$ is said to be $m$-powerful if
$p^m\mid k$ whenever $p$ is a prime divisor of $k$.
We will focus our attention here upon the orbifold $(\PP^1,\Delta)$ 
associated to the divisor
$$
\Delta=\left(1-\frac{1}{m}\right)[0]+
\left(1-\frac{1}{m}\right)[1]+
\left(1-\frac{1}{m}\right)[\infty],
$$
with Euler characteristic $\chi=-1+\frac{3}{m}.$
The density of integral points on $(\PP^1,\Delta)$ with height at most
$B$ is captured by the counting function
$$
N_{m-1}(B)=\# \left\{
(x,y,z)\in \NN_{\mathrm{prim}}^3: x+y=z, ~x,y,z\leq B, ~
\mbox{$x,y,z$ $m$-powerful}
\right\},
$$
where $\NN$ denotes the set of positive integers and 
$\NN_{\mathrm{prim}}^3$ denotes the set of primitive vectors in $\NN^3$.

An old result of Erd\H{o}s and Szekeres \cite{e-g} shows that the
number of $m$-powerful integers up to $x$ is  $c_m
x^{\frac{1}{m}}+O(x^{\frac{1}{m+1}})$, for a certain constant $c_m>0$.
This leads to a basic trichotomy: we expect 
only finitely many integral points when $\chi<0$, we expect $N_{m-1}(B)$ to
grow at worst logarithmically in $B$ when $\chi=0$ and we expect
$N_{m-1}(B)$ to have order $B^\chi$ when $\chi>0$.
When $m=3$ work of 
Nitaj \cite{nitaj} shows that $N_{2}(B)\gg \log B$.
Our goal in this paper is to provide
evidence in support of the expected order $B^{\frac{1}{2}}$ of
$N_1(B)$ when $m=2$. 

\begin{conjecture}\label{con:1}
We have 
$$
N_1(B)=cB^{\frac{1}{2}}(1+o(1)),
$$ 
as $B\rightarrow \infty$, with $c=2.677539267$ up to eight digits. 
\end{conjecture}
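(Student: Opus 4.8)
Since Conjecture~\ref{con:1} concerns a genuinely hard additive problem, the plan is not to prove it outright but to set up the heuristic that produces the leading term and, above all, pins down the constant $c$; the same reasoning will reveal exactly where a rigorous argument must break down. The starting point is the classical fact that every squareful number has a unique representation $a^2b^3$ with $b$ squarefree. Writing $x=a_1^2b_1^3$, $y=a_2^2b_2^3$, $z=a_3^2b_3^3$ converts the count into
$$
N_1(B)=\sum_{b_1,b_2,b_3 \text{ squarefree}}\#\Big\{(a_1,a_2,a_3):\ b_1^3a_1^2+b_2^3a_2^2=b_3^3a_3^2,\ a_i^2b_i^3\leq B,\ \gcd(x,y,z)=1\Big\}.
$$
For each fixed $(b_1,b_2,b_3)$ the inner equation is a ternary conic, and conics obey the Hasse principle; when it is everywhere locally soluble one can count its integer points in the expanding box $a_i^2b_i^3\leq B$, obtaining a main term that factors as a product of $p$-adic densities times an archimedean volume. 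I would organize the whole calculation around this family of conics.

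To make the constant transparent I would recast the count probabilistically. By the result of Erd\H{o}s and Szekeres in \cite{e-g}, the counting density of squareful numbers near $t$ is $\rho(t)=\frac{c_2}{2}t^{-1/2}$ with $c_2=\zeta(3/2)/\zeta(3)$. Treating the three squareful conditions as independent up to local corrections, the expected value of $N_1(B)$ becomes
$$
\mathfrak{S}\iint_{\substack{x,y>0\\ x+y\leq B}}\rho(x)\,\rho(y)\,\rho(x+y)\,\d x\,\d y
=\mathfrak{S}\,\frac{c_2^3}{8}\,B^{1/2}\iint_{\substack{u,v>0\\ u+v\leq 1}}\frac{\d u\,\d v}{\sqrt{uv(u+v)}},
$$
where the substitution $x=Bu$, $y=Bv$ isolates the power $B^{1/2}$, in agreement with the value $\chi=\tfrac12$ predicted by the trichotomy, and leaves a convergent archimedean integral. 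The factor $\mathfrak{S}=\prod_p\sigma_p$ is the singular series that corrects the false independence assumption: each $\sigma_p$ is the limiting $p$-adic density of primitive triples with $x+y=z$ and all entries squareful, divided by the product of the three marginal squareful densities at $p$. The inner integral evaluates in closed form to $2\pi$ by slicing along $w=u+v$ and recognizing a Beta integral $B(\tfrac12,\tfrac12)=\pi$, so that $c=\tfrac{\pi}{4}\,c_2^3\,\mathfrak{S}$, and the numerical evaluation of this product is what yields $c=2.677539267$ to eight digits.

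The remaining work then splits into two finite tasks: evaluating each local factor $\sigma_p$ by a careful analysis of the squareful conditions modulo powers of $p$, with attention to the primitivity constraint and to the ramified primes $2,3$; and confirming the archimedean evaluation above. \emph{The main obstacle} will be converting the independence heuristic into a theorem. Counting solutions of $x+y=z$ in squareful numbers amounts to controlling the conic point count uniformly as $(b_1,b_2,b_3)$ ranges over an infinite family, with an error term small enough to survive summation over the $b_i$; no present method delivers this uniformity. The difficulty is of the same nature as other notoriously open additive questions about powerful numbers, which is precisely why the result can only be stated as a conjecture.
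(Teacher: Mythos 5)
Your reduction to the family of conics, the exponent $\tfrac12$, the archimedean computation (the integral is indeed $2\pi$, and $\rho(t)=\tfrac12 c_2t^{-1/2}$ with $c_2=\zeta(3/2)/\zeta(3)$ is the correct Erd\H{o}s--Szekeres density), and your diagnosis of why a proof is out of reach are all sound and consistent with the paper, which derives its constant by attaching to each conic $C_\y$ its Peyre constant $c_{H_\y}(C_\y(\mathbb{A}_{\Q})^+)$ and summing over $\y$ as in \eqref{eq:constant_2}. The genuine gap is in your derivation of the constant: the ansatz $c=\tfrac{\pi}{4}c_2^3\prod_p\sigma_p$ is a pure Euler product, and so it presumes that, after the local corrections $\sigma_p$, the conditions imposed at distinct primes are statistically independent. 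For this problem that independence fails, by a bounded but non-negligible amount, because of quadratic reciprocity.

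Concretely, the local density of $C_\y$ at an odd prime $p\mid y_0$ is proportional to $1+(\tfrac{y_1y_2}{p})$ (Lemma \ref{lem:bad_place_p}); it records local solubility of the conic. Take $\y=(q,r,1)$ with $q,r$ distinct odd primes: the weight attached to $\y$ in \eqref{eq:sonia} contains the factor $\sigma_{2,\y}\bigl(1+(\tfrac rq)\bigr)\bigl(1+(\tfrac qr)\bigr)$, and by Lemma \ref{lem:bad_place_2} the $2$-adic factor $\sigma_{2,\y}$ vanishes exactly when $q\equiv r\equiv 3\bmod 4$, which by reciprocity is exactly when $(\tfrac rq)(\tfrac qr)=-1$. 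Thus on the support of the $2$-adic factor the two Legendre symbols always agree, and averaging over $q,r$ gives
$$
\mathbb{E}\Bigl[\sigma_{2,\y}\bigl(1+(\tfrac rq)\bigr)\bigl(1+(\tfrac qr)\bigr)\Bigr]
=\mathbb{E}\bigl[\sigma_{2,\y}\bigl(2+2(\tfrac rq)\bigr)\bigr]=\tfrac32,
$$
whereas any model in which the places behave independently predicts $\mathbb{E}[\sigma_{2,\y}]\cdot 1\cdot 1=\tfrac34$. This factor-of-two enhancement is the Hilbert-symbol product formula at work (the number of insoluble places of a conic is always even, so solubility at one place is positively correlated with solubility at the others), and no product $\prod_p\sigma_p$ of purely local quantities can reproduce it. The same phenomenon is why the paper's final expression \eqref{eq:constant_program} is \emph{not} an Euler product: the quantities $\Delta_{-1}(d)$ and $\Delta_0(d)$ count factorizations of $d$ subject to joint residue and reciprocity-coupled conditions and are not multiplicative in $d$. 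Consequently your constant $\tfrac{\pi}{4}c_2^3\mathfrak{S}$ is a genuinely different number from the paper's $c$, and the final step of your argument --- asserting that its numerical value is $2.677539267$ --- would not materialize. Worse, the discrepancy is falsifiable: the correlations you discard inflate the constant (as the computation above shows), and Theorem \ref{t:lower} already \emph{proves} $N_1(B)\geq cB^{\frac12}(1+o(1))$ with the paper's $c$, so a conjectural asymptotic with a smaller, decorrelated constant is refuted rather than merely unproven. The correct organisation, as in \S\ref{s:constant}, is to keep the sum over $\y$ outside the product over places, i.e.\ $c=\tfrac14\sum_\y\mu^2(y_0y_1y_2)\,c_{H_\y}(C_\y(\mathbb{A}_{\Q})^+)$, so that all cross-prime correlations are retained.
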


\begin{figure}
  \centering
  \input{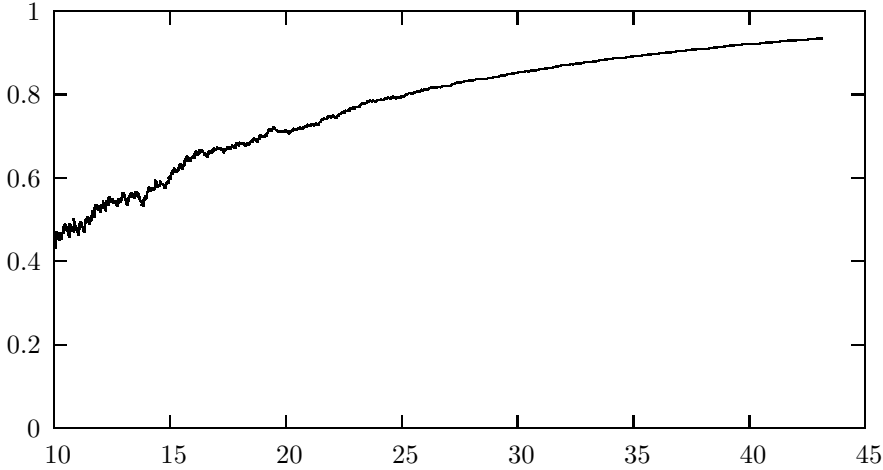}
  \caption{Values of $N_1(B)/(cB^{\frac{1}{2}})$}
  \label{fig:test1}
\end{figure}

The explicit conjectured value of $c$ is too complicated to record
here, but may be found  in \eqref{eq:sonia} and \eqref{eq:constant_program}.
In Figure \ref{fig:test1} the values of $N_1(B)/(cB^{\frac{1}{2}})$ are plotted
for $B$ up to $10^{13}$, where the horizontal 
axis is plotted as $\log_2 B$.
In Table \ref{num} we present some explicit numerical data, including the determination of  the quotient  
$N_1(B)/ (cB^{\frac{1}{2}})$ for large values of $B$.

Any positive squareful integer $k$ can be written uniquely as
$k=x^2y^3$, with $x,y \in \N$ and $y$ square-free. 
Using this description we have
\begin{equation}\label{eq:N1}
N_1(B)=
\sum_{\y \in \NN^3}\mu^2(y_0y_1y_2)
\# \left\{
\x \in \NN^3\cap C_\y: 
\begin{array}{l}
\gcd(x_0y_0,x_1y_1,x_2y_2)=1, \\
x_0^2y_0^3, x_1^2y_1^3, x_2^2y_2^3\leq B
\end{array}
 \right\},
\end{equation}
where $\mu$ is the M\"obius function and 
$C_\y$ denotes the conic
$
x_0^2y_0^3+x_1^2y_1^3=x_2^2y_2^3.
$
One is naturally led to
analyse $N_1(B)$ by counting points on each conic and then summing the
contribution over the $\y$. 
This is the point of view adopted by the second author
\cite{KVV}, where the structure of the  orbifold
$(\mathbf{P}^1,\Delta)$ is generalised to a higher-dimensional
analogue $(\mathbf{P}^{n-1},\Delta)$, corresponding to a hyperplane of
squareful numbers. 
An asymptotic formula of  the expected order of magnitude is then
obtained when there are $n+1\geq 5$ terms present in the hyperplane.
In addition to this \cite{KVV} contains an  interpretation of  the leading constant
 in terms of local densities for the
underlying quadric. We will revisit this discussion in  
\S \ref{s:constant} in order to justify the  numerical value of the constant 
in Conjecture \ref{con:1}.

\begin{table}[h]
\centering
\begin{tabular}{|r|r|r|}
\hline
$B$ & $N_1(B)$ & $
N_1(B)/(cB^{\frac{1}{2}}),
$\\
\hline
$10^7$ & 6562 & 0.774997635 \\
$10^8$ & 21920 & 0.818662130\\
$10^9$ & 72124 & 0.851812396\\
$10^{10}$ & 235168 & 0.878298977\\
$10^{11}$ & 762580 & 0.900636538\\
$10^{12}$ & 2465044 & 0.920637852\\
$10^{13}$ & 7914884 &0.934778480\\
\hline
\end{tabular}
\vspace{0.2cm}
\caption{}\label{num}
\end{table}

Ignoring all but the term with 
 $\y=(1,1,1)$ in \eqref{eq:N1}, one readily  arrives at the lower bound 
$N_1(B)\gg B^{\frac{1}{2}}$, via 
the familiar parametrisation for Pythagorean triples.
Building on 
this observation suitably, we will  sketch a proof of the  following result in \S \ref{s:lower}.

\begin{theorem}\label{t:lower}
We have $N_1(B)\geq c B^{\frac{1}{2}}(1+o(1))$, where $c$ is the constant in Conjecture \ref{con:1}.
\end{theorem}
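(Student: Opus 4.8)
\section*{Proof proposal for Theorem \ref{t:lower}}

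The plan is to extract the full main term from the decomposition \eqref{eq:N1} by summing the individual conic counts over a growing but otherwise arbitrary finite collection of $\y$, exploiting that every summand in \eqref{eq:N1} is non-negative. Since $N_1(B)$ dominates any such partial sum, it suffices to fix a finite set $S$ of admissible triples $\y$ (those with $\mu^2(y_0y_1y_2)=1$ for which $C_\y$ carries an integral point) and to establish, for each single $\y\in S$, a genuine asymptotic for the associated count. No uniformity in $\y$ is then needed, which is precisely what renders the lower bound accessible while the matching upper bound remains out of reach. The case $\y=(1,1,1)$ already recovers the Pythagorean lower bound $N_1(B)\gg B^{\frac12}$ noted above, and the point is to harvest the contribution of all the other conics as well.

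First I would treat one conic at a time. For fixed $\y$ with $C_\y(\QQ)\neq\emptyset$, I fix a primitive integral point and use the secant parametrisation to write every rational point of $C_\y$ as $\x=\x(s,t)$, each coordinate being a binary quadratic form in $(s,t)$ whose coefficients depend only on $\y$ and the base point. Primitive solutions obeying $\gcd(x_0y_0,x_1y_1,x_2y_2)=1$ then correspond to coprime pairs $(s,t)$ lying in a fixed union of residue classes. Since $x_i(s,t)\asymp_\y\|(s,t)\|^2$, the box conditions $x_i^2y_i^3\le B$ amount to the single requirement that $(s,t)$ lie in the dilate $B^{1/4}\Omega_\y$ of a bounded region $\Omega_\y\subset\RR^2$ determined by $\y$. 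Counting coprime lattice points in a planar region dilated by $B^{1/4}$ is routine and yields
\[
\#\left\{\x\in\NN^3\cap C_\y:\gcd(x_0y_0,x_1y_1,x_2y_2)=1,\ x_i^2y_i^3\le B\right\}=\kappa_\y B^{\frac12}+o(B^{\frac12}),
\]
where $\kappa_\y$ equals $\meas(\Omega_\y)$ weighted by $1/\zeta(2)$ and by a local correction arising from the congruence conditions.

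Summing over $\y\in S$ and letting $B\to\infty$ gives $\liminf_{B\to\infty}N_1(B)/B^{\frac12}\ge\sum_{\y\in S}\kappa_\y$ for every finite $S$. Letting $S$ exhaust the admissible triples then produces $\liminf_{B\to\infty}N_1(B)/B^{\frac12}\ge\sum_\y\kappa_\y$, the series converging because $\kappa_\y$ decays like a negative power of $y_0y_1y_2$.

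The remaining and principal task is to identify $\sum_\y\kappa_\y$ with the constant $c$ of Conjecture \ref{con:1}. Here I would invoke the interpretation of the leading constant in terms of local densities for the underlying quadric developed in \cite{KVV}, reassembling the per-conic densities $\kappa_\y$---each the product of a real density $\meas(\Omega_\y)$ with $p$-adic densities encoded in the congruence conditions and in the local solubility of $C_\y$---into the single Euler product defining $c$. This reorganisation, carried out in \S\ref{s:constant}, is the delicate step: the individual conic counts are routine because $C_\y$ has genus $0$, but verifying that their densities sum \emph{exactly} to the conjectured $c$, rather than merely to some positive constant, is where the arithmetic content of the argument lies.
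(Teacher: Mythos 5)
Your overall strategy is sound and genuinely different from the paper's. The paper fixes a small $\theta>0$, truncates the sum in \eqref{eq:N1} to $y_0,y_1,y_2\le B^\theta$, and applies the Franke--Manin--Tschinkel asymptotic \eqref{eq:fmt} to all conics in that range simultaneously; this is exactly why it needs the polynomial dependence $O(M^\psi P^{1-\delta})$ of the error term on the coefficients, a M\"obius inversion to impose $\gcd(x_0y_0,x_1y_1,x_2y_2)=1$, and a tail estimate to complete the $\y$-sum. Your exhaustion argument over finite sets $S$ removes the need for any uniformity in $\y$: since every term in \eqref{eq:N1} is non-negative and the representation $k=x^2y^3$ with $y$ squarefree is unique, the counts attached to distinct $\y$ are disjoint, so $\liminf_{B\to\infty}N_1(B)/B^{\frac12}\ge\sum_{\y\in S}\kappa_\y$ for every finite $S$, and taking the supremum over $S$ yields the theorem provided $\sum_\y\kappa_\y=c$. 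This is a real simplification of the analytic input: any fixed-conic asymptotic suffices, and for a single conic the secant parametrisation plus routine lattice-point counting does indeed give $\kappa_\y B^{\frac12}+o_\y(B^{\frac12})$.

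The gap is in your final step, and it is not a small one. You propose to obtain $\sum_\y\kappa_\y=c$ by ``invoking'' \cite{KVV} and \S\ref{s:constant}, but \S\ref{s:constant} only computes the adelic constant $c_{H_\y}(C_\y(\mathbb{A}_{\Q})^+)$ as an Euler product starting from Peyre's definition; it says nothing about the constant $\kappa_\y$ your parametrisation produces. The equality $\kappa_\y=\tfrac14\,c_{H_\y}(C_\y(\mathbb{A}_{\Q})^+)$ is precisely where the arithmetic content of the theorem sits, and in the paper it is supplied by \eqref{eq:fmt} --- whose leading constant is already Peyre's --- together with the verification that the M\"obius-inverted sum of Peyre constants over the auxiliary conics $C_{\k,\y'}$ equals the constant of the restricted adelic space $C_\y(\mathbb{A}_{\Q})^+$. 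To close your argument you must do one of two things: (a) prove Peyre's form of the leading constant directly from the parametrisation, matching $\meas(\Omega_\y)$ with the archimedean density $\text{\boldmath{$\omega$}}_{H_\y,\infty}$ and the congruence and coprimality densities (note that $\gcd(x_0y_0,x_1y_1,x_2y_2)=1$ is itself a collection of local conditions at the primes dividing $y_0y_1y_2$ requiring its own M\"obius or local-density treatment) with the $p$-adic factors of \S\ref{s:constant} --- doable since $C_\y\cong\PP^1$, but a genuine computation, not a citation; or (b) cite \eqref{eq:fmt} for each fixed conic, in which case you still need the paper's M\"obius inversion to pass from the unrestricted count to the restricted one, though your exhaustion trick would then let you discard all uniformity considerations and the choice of $\theta$. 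As it stands, the proposal establishes a lower bound $c'B^{\frac12}(1+o(1))$ for some $c'>0$, but not that $c'=c$.
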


The problem of producing an upper bound of the expected order of magnitude is much more challenging. In \S \ref{s:upper} we shall establish the following estimate.

\begin{theorem}\label{t:upper}
We have $N_1(B)=O(B^{\frac{3}{5}}\log^{12}B)$.
\end{theorem}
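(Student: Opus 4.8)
The plan is to start from the expansion \eqref{eq:N1}, which reduces the estimation of $N_1(B)$ to controlling, for each squarefree-supported $\mathbf{y}=(y_0,y_1,y_2)$, the number $M(\mathbf{y})$ of integer points on the conic $C_\mathbf{y}\colon y_0^3x_0^2+y_1^3x_1^2=y_2^3x_2^2$ lying in the box $x_i\le (B/y_i^3)^{1/2}$ and subject to the coprimality condition, and then to bound $\sum_\mathbf{y}\mu^2(y_0y_1y_2)M(\mathbf{y})$. A first reduction is to discard all $\mathbf{y}$ for which $C_\mathbf{y}$ is not everywhere locally soluble, since these contribute nothing. For the surviving $\mathbf{y}$ the conic has a rational point, and a Holzer-type bound on the least solution shows that the box can only be non-empty when $y_0y_1y_2\ll B^{1/3}$; thus the outer sum is effectively finite and it remains to exploit the sparsity of points on each conic.

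The key analytic input is a uniform upper bound for $M(\mathbf{y})$. First I would parametrise the primitive points of $C_\mathbf{y}$: fixing one reduced solution $\mathbf{x}^{(0)}$, every primitive solution is obtained by intersecting $C_\mathbf{y}$ with the lines through $\mathbf{x}^{(0)}$, giving $x_i=f_i(s,t)/g$ for coprime $(s,t)$ and binary quadratic forms $f_i$ whose coefficients and common discriminant are governed by $\mathbf{x}^{(0)}$ and by $(y_0y_1y_2)^3$. Counting the admissible $(s,t)$ then becomes a lattice-point count inside the intersection of the three regions $f_i(s,t)\le g\,(B/y_i^3)^{1/2}$; since the relevant form is positive definite, a geometry-of-numbers estimate yields a bound of the shape $M(\mathbf{y})\ll B^\varepsilon\bigl(1+B^{1/2}/\delta(\mathbf{y})\bigr)$, where $\delta(\mathbf{y})$ measures the size of the least solution of $C_\mathbf{y}$ (equivalently $\sqrt{\operatorname{disc} f}$). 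The factor $B^\varepsilon$ absorbs the divisor-type multiplicity coming from the several orbits of solutions and from the coprimality sieve.

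It then remains to insert this bound and sum over $\mathbf{y}$. I would split the range of $\mathbf{y}$ according to the size of $\delta(\mathbf{y})$ (equivalently according to $\max_i y_i$ and the square common factors that shrink the discriminant of $C_\mathbf{y}$): the conics with large least solution are individually cheap but numerous, while those with anomalously small least solution are rare but carry many points. Balancing these two contributions — the ``$1$'' term summing to $O(B^{1/3}\log^2 B)$ over the $\mathbf{y}$ with $y_0y_1y_2\ll B^{1/3}$, and the geometric term optimised against the count of soluble conics — is what I expect to produce the exponent $\tfrac35$, while the accumulated divisor bounds and the summations over the squarefree parameters $y_0,y_1,y_2$ account for the power $\log^{12}B$.

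The main obstacle is the uniform conic count itself, and specifically the conics whose least solution is much smaller than the generic size $(y_0y_1)^{3/2}$: these arise from large square common factors in the coefficients and are precisely the configurations on which $M(\mathbf{y})$ is largest, so a crude application of Holzer's bound is not enough. Controlling them requires keeping explicit track of the discriminant of the parametrising form as a function of the $\gcd$'s among the $y_i$ and of the local solubility data, and it is the need to do this uniformly — rather than the geometry-of-numbers step, which is routine once the forms are in hand — that I expect to force the loss from the conjectured exponent $\tfrac12$ up to $\tfrac35$.
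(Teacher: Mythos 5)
Your proposal rests on counting points conic-by-conic and summing over $\mathbf{y}$, and the step that makes the outer sum tractable is wrong. Holzer's theorem bounds the \emph{least} solution of a soluble conic from above; it cannot be used in the direction you need. From a point of $C_{\mathbf{y}}$ in the box you can only deduce $y_i^3\le x_i^2y_i^3\le B$ for each $i$, hence $y_0y_1y_2\le B$, not $y_0y_1y_2\ll B^{1/3}$: nothing prevents a conic with $y_0y_1y_2$ far beyond $B^{1/3}$ from possessing a sporadic point of height below $B^{1/2}$. Consequently your ``$1$'' term is a priori of size $B$ rather than $B^{1/3}\log^2 B$ (discarding conics that are not everywhere locally soluble saves at most powers of $\log B$, by Serre-type density estimates), and the whole difficulty is concentrated in bounding, for large $Y$, the number of $\mathbf{y}$ whose conic has an anomalously small point. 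This is exactly the obstruction the paper itself singles out as open: having ``better control over the $\mathbf{y}$ which produce conics $C_{\mathbf{y}}$ containing at least one rational point of small height.'' Your proposed remedy --- tracking square common factors and gcd's among the $y_i$ --- is vacuous here, because the weight $\mu^2(y_0y_1y_2)$ in \eqref{eq:N1} forces the $y_i$ to be squarefree and pairwise coprime, so the discriminant of the parametrising form never degenerates; the small solutions are sporadic and are not produced by any such mechanism. In effect, your plan requires solving the very problem whose intractability caps the theorem at the exponent $\frac{3}{5}$.

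The idea you are missing is the dual count. The paper applies one and the same uniform estimate (Lemma \ref{lem:ferran}, coming from Heath-Brown's determinant method) in two ways on each dyadic box $X_i\le x_i<2X_i$, $Y_i\le y_i<2Y_i$: once with $d=2$, treating the equation as a family of conics in $\mathbf{x}$ with coefficients $y_i^3$, which gives $N(\mathbf{X},\mathbf{Y})\ll (Y+X^{1/3})\log^3 B$; and once with $d=3$, treating it as a family of plane cubics in $\mathbf{y}$ with coefficients $x_i^2$, which gives $N(\mathbf{X},\mathbf{Y})\ll (X+Y^{1/3}X^{5/9})\log^6 B$, where $X=X_0X_1X_2$ and $Y=Y_0Y_1Y_2$. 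The regime that defeats your argument --- $Y$ large with each conic contributing $O(1)$ points --- is precisely where the cubic bound is strong, since there $X$ is small. Under the constraint $X^2Y^3\le B^3$ one has $\min\{X,Y\}\le X^{2/5}Y^{3/5}\le B^{3/5}$ and likewise $\min\{Y,Y^{1/3}X^{5/9}\}\le Y^{9/25}\bigl(Y^{1/3}X^{5/9}\bigr)^{18/25}=X^{2/5}Y^{3/5}\le B^{3/5}$, and the exponent $\frac{3}{5}$ emerges from this balancing (with critical case $x_i\approx y_i\approx B^{1/5}$), not from a trade-off between Holzer bounds and solubility densities. Without a second, dual estimate of this kind your scheme cannot close.
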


With more work it ought to be possible to remove the factor involving $\log B$ from Theorem~\ref{t:upper}. 
The proof of Theorem \ref{t:upper}  involves two estimates. The first
is based on fixing the $\y$ and counting points on the conic $C_\y$,
uniformly in the coefficients. The second  involves switching
the r\^oles of $\y$ and $\x$, viewing the equation as a family of
plane cubics instead.  
For both of these the determinant method of Heath-Brown \cite{annal}
is a key tool. 
The same argument has been observed by a number of mathematicians, 
including Valentin Blomer in private communication with the first
author. 
In order to improve the exponent of $B$ in Theorem
\ref{t:upper} one requires a new means of treating the contribution
from $\x,\y$ for which each $x_i$ and $y_i$ has order of magnitude
$B^{\frac{1}{5}}$.   
It would be desirable, for example,  to have  better control over the $\y$ which
produce conics  
$C_{\y}$ containing at least one rational point of small height.

\begin{ack}
The authors are grateful to Emmanuel Peyre for useful comments and 
Hendrik Hubrechts for help with preparing the numerical evidence. 
While working on this paper the  first author was 
supported by EPSRC grant number \texttt{EP/E053262/1}.  
\end{ack}

\section{The constant}\label{s:constant}

Recall the expression for $N_1(B)$ in \eqref{eq:N1}, in which 
$C_\y$ denotes the conic 
$$
x_0^2y_0^3+x_1^2y_1^3=x_2^2y_2^3,
$$
for given $\y=(y_0,y_1,y_2)\in \NN^3$.
Let $H_\y: C_\y(\QQ) \rightarrow \R_{\geq 0}$ denote the height function 
$$
[x_0,x_1,x_2]\mapsto \max\{|x_0^2y_0^3|,|x_1^2y_1^3|,|x_2^2y_2^3|\}^{\frac{1}{2}},
$$ 
if $x_0,x_1,x_2 \in \Z$ satisfy 
$\gcd(x_0,x_1,x_2)=1$. 
On noting that $\x$ and $-\x$ represent the same point in $\PP^2$ we easily infer that 
$N_1(B)$ is approximated by the sum
\begin{equation}\label{eq:as_conic}
\frac{1}{4}
\sum_{\y \in \NN^3}\mu^2(y_0y_1y_2)
\#\left\{x\in C_\y(\QQ): H_\y(x)\leq B^{\frac{1}{2}}, ~
\gcd(x_0y_0,x_1y_1,x_2y_2)=1\right\}.
\end{equation}
Following the framework developed by  the second author \cite[\S 5]{KVV},  we 
are therefore led to take the value 
\begin{equation}\label{eq:constant_2}
c=\frac{1}{4}
\sum_{\y\in \NN^3}\mu^2(y_0y_1y_2)  
c_{H_\y}(
C_\y(\mathbb{A}_{\Q})^+),
\end{equation}
in Conjecture \ref{con:1}.
Here, if  $C_\y(\mathbb{A}_{\Q})^+$ denotes the open subset of the adelic space $C_\y(\mathbb{A}_{\Q})$ carved out by the condition
$\min_{0\leq i\leq 2} \{v_{p}(x_{i,p}y_i)\}=0$ for each
prime $p$, then 
$c_{H_\y}(
C_\y(\mathbb{A}_{\Q})^+)$ is 
the constant conjecturally introduced by Peyre \cite[D\'efinition 2.5]{Peyre}.
In particular it follows that 
\begin{equation}\label{eq:constant}
c_{H_\y}(
C_\y(\mathbb{A}_{\Q})^+)
=\alpha(C_\y)
\text{\boldmath{$\omega$}}_{H_\y}(
C_\y(\mathbb{A}_{\Q})^+),
\end{equation}
where 
$\text{\boldmath{$\omega$}}_{H_\y}(
C_\y(\mathbb{A}_{\Q})^+)$
denotes the Tamagawa measure of
$C_\y(\mathbb{A}_{\Q})^+$ associated to the height $H_\y$ and 
$\alpha(C_\y)$ is the volume of a certain polytope contained in the cone of
effective divisors.  

Let $\y\in \NN^3$ with $\mu^2(y_0y_1y_2)=1$. 
In the present setting we have $\Pic(C_\y)\cong \ZZ$ and one finds, using 
\cite[D\'efinition 2.4]{Peyre},  that 
\begin{equation}\label{eq:alpha}
\alpha(C_\y)=\frac{1}{2}.
\end{equation}
In \cite{KVV}, wherein non-singular quadrics in $\PP^{n}$ feature for $n\geq 4$, 
it is worth highlighting that the corresponding value of the constant is found to be $\frac{1}{n-1}$ using the Lefschetz hyperplane 
theorem. This is  no longer true when considering conics in $\PP^2$, since the anticanonical divisor is not a generator for the Picard group.

Turning to the Tamagawa constant 
we let $S=\{\infty,2\}\cup\{p \mid 
y_0y_1y_2\}$, a finite set of places.
The Tamagawa measure on $C_\y(\mathbb{A}_{\Q})$ associated to the height function $H_\y$ is given  by 
\begin{equation}\label{eq:tama}
\text{\boldmath{$\omega$}}_{H_\y}=\lim_{s\to
  1}(s-1)L_S(s,\Pic(\overline{C_\y})) \prod_{v\in
  \Val(\Q)}\lambda_v^{-1}\text{\boldmath{$\omega$}}_{H_{\y},v}, 
\end{equation}
where 
\begin{equation}\label{convergence_factors}
\lambda_v=\begin{cases}
(1-\frac{1}{p})^{-1}, &\mbox{if $v\in \Val(\Q)-S$,}\\ 
1, & \mbox{otherwise}
\end{cases}
\end{equation}
and 
\begin{align*}
L_S(s,\Pic(\overline{C_\y}))
=\prod_{v\in \Val(\Q)-S}\left(1-\frac{1}{p^{s}}\right)^{-1}
=\zeta(s)\prod_{p\mid 2y_0y_1y_2}\left(1-\frac{1}{p^{s}}\right).
\end{align*}
Hence
\begin{align}\label{L-function-explicit}
\lim_{s\to 1}(s-1)L_S(s,\Pic(\overline{C_\y}))
&=\prod_{p\mid 2y_0y_1y_2}\left(1-\frac{1}{p}\right).
\end{align}
In the next few sections, we will calculate the $v$-adic densities at
the different places.

\subsection{Density at the good places}
Let $p$ be a prime such that   $p\nmid 2y_0y_1y_2$. 
Recall that $C_\y(\Q_p)^+$ is defined as the subset of points 
$[x_{0,p},x_{1,p},x_{2,p}]\in C_\y(\Q_p)$, with $x_{i,p}\in \Z_p$ and 
$\min_{0\leq i\leq 2} \{v_p(x_{i,p})\}=0$, for which 
\begin{equation}\label{eq:xy}
\min_{0\leq i\leq 2}
\{v_p(x_{i,p}y_i)\}=0.
\end{equation}
Since  $p\nmid y_0y_1y_2$ this latter condition is automatically
satisfied, whence
$C_\y(\Q_p)^+=C_\y(\Q_p)$.
By Lemmas 3.2 and 3.4 in \cite{peyre3} and  \cite[Lemme 5.4.6]{Peyre}, we have 
$$
\text{\boldmath{$\omega$}}_{H_{\y},p}(C_\y(\Q_p))=\frac{\#C_\y(\mathbf{F}_p)}{p}.
$$ 
Since $C_\y(\mathbf{F}_p)\neq \emptyset$ by Chevalley--Warning, 
we deduce that $\#C_\y(\mathbf{F}_p)=\#\mathbf{P}^1(\mathbf{F}_p)=p+1$. This implies that for the good places we have 
\begin{equation}\label{good_places}
\begin{split}
\prod_{v\in \Val(\Q)-S}\lambda_v^{-1}\text{\boldmath{$\omega$}}_{H_{\y},v}(C_\y (\Q_v)^+)&=\prod_{p\nmid 2y_0y_1y_2}\left(1-\frac{1}{p}\right)\left(1+\frac{1}{p}\right)\\
&=\frac{8}{\pi^2}\cdot \prod_{\substack{p\mid  y_0y_1y_2\\p>2}}\left(
1-\frac{1}{p^2}\right)^{-1},
\end{split}
\end{equation}
since
$\prod_{p>2}\left(1-\frac{1}{p^2}\right)
=\frac{4}{3}\cdot\frac{6}{\pi^2}=\frac{8}{\pi^2}$.

\subsection{Density at the bad places}

We now suppose that   $p$ is a prime divisor of $2y_0y_1y_2$. In this case, when considering $C_\y(\Q_p)^+$, the condition \eqref{eq:xy}
will no longer be satisfied trivially.
Let 
$$
N_{\y}^* (p^r)=\#\left\{\x\in (\Z/p^r\Z)^3-(p\Z/p^r\Z)^3: 
\begin{array}{l}
y_0^3x_0^2+y_1^3x_1^2\equiv y_2^3x_2^2 \bmod{p^r},\\
\min_{0\leq i\leq 2}
\{v_p(x_{i}y_i)\}=0
\end{array}
\right\}.
$$
Using  Lemmas 3.2 and 3.4 in \cite{peyre3}
and  \cite[Lemme 5.4.6]{Peyre}, 
 we deduce the  existence of $r_0\in \NN$ such that 
\begin{equation}\label{property_measure}
\text{\boldmath{$\omega$}}_{H_{\y},p}(C_\y(\Q_p)^+)=\left(1-\frac{1}{p}\right)^{-1}\cdot \frac{N_{\y}^*(p^r)}{p^{2r}}, 
\end{equation}
for each $r\geqslant r_0$.  The following pair of results are concerned with the calculation of 
$N_{\y}^*(p^r)$ for primes $p\mid 2y_0y_1y_2$.

\begin{lemma}\label{lem:bad_place_p}
If $p\mid y_0y_1y_2$ and $p>2$, we have 
$$
\frac{N_{\y}^*(p^r)}{p^{2r}}=
\left(1-\frac{1}{p}\right) \times
\begin{cases}\left(1+\left(\frac{y_1y_2}{p}\right)\right), &
\mbox{ if $p\mid y_0$},\\ 
\left(1+\left(\frac{y_0y_2}{p}\right)\right), &
\mbox{ if $p\mid y_1$},\\
\left(1+\left(\frac{-y_0y_1}{p}\right)\right), &
\mbox{ if $p\mid y_2$}.
\end{cases}
$$
\end{lemma}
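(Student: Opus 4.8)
The plan is to fix a prime $p\mid y_0y_1y_2$ with $p>2$ and to compute $N_\y^*(p^r)$ by reducing the defining congruence modulo $p$ and then lifting by Hensel's lemma. Since $y_0y_1y_2$ is squarefree and the $y_i$ are pairwise coprime, exactly one of them is divisible by $p$; I would treat the case $p\mid y_0$ in full and obtain the other two by symmetry, tracking the sign coming from the side of the conic on which the distinguished variable sits. The first step is to unravel condition \eqref{eq:xy}: as $v_p(x_0y_0)=v_p(x_0)+1\geq 1$, the requirement $\min_i\{v_p(x_iy_i)\}=0$ is equivalent to asking that \emph{at least one of $x_1,x_2$ be a $p$-adic unit}. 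Thus, writing
\begin{equation*}
F(\x):=y_0^3x_0^2+y_1^3x_1^2-y_2^3x_2^2,
\end{equation*}
the quantity $N_\y^*(p^r)$ counts the $\x\bmod p^r$ with $F(\x)\equiv 0\pmod{p^r}$ for which $x_1$ or $x_2$ is a unit.

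Next I would reduce modulo $p$. Because $p\mid y_0$ forces $p^3\mid y_0^3$, the $x_0$-term disappears and the congruence collapses to the binary relation $y_1^3x_1^2\equiv y_2^3x_2^2\pmod p$, while $\bar x_0$ ranges freely over $\FF_p$. The number of solutions $(\bar x_1,\bar x_2)\neq(0,0)$ is governed by whether the form $y_1^3X^2-y_2^3Y^2$ splits over $\FF_p$: setting $t=\bar x_1/\bar x_2$ one needs $t^2\equiv y_2^3/y_1^3$, which is solvable precisely when $y_1y_2$ is a quadratic residue, so the count is $(p-1)\bigl(1+\left(\frac{y_1y_2}{p}\right)\bigr)$. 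Together with the $p$ free choices of $\bar x_0$ this gives $p(p-1)\bigl(1+\left(\frac{y_1y_2}{p}\right)\bigr)$ admissible points modulo $p$.

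The final step is the lift. At any such point the reduction of the gradient of $F$ is $(0,2y_1^3\bar x_1,-2y_2^3\bar x_2)$, which is nonzero since $p>2$, $p\nmid y_1y_2$ and $(\bar x_1,\bar x_2)\neq(0,0)$; hence every admissible mod-$p$ solution is a smooth $\FF_p$-point of $F$ at which the partial derivative in $x_1$ (or in $x_2$) is a unit. Solving for that variable by Hensel's lemma while letting the other two coordinates lift freely shows that each such point lifts to exactly $p^{2(r-1)}$ solutions modulo $p^r$. Multiplying out yields $N_\y^*(p^r)=p(p-1)\bigl(1+\left(\frac{y_1y_2}{p}\right)\bigr)p^{2(r-1)}$, and dividing by $p^{2r}$ gives the asserted $\left(1-\frac1p\right)\bigl(1+\left(\frac{y_1y_2}{p}\right)\bigr)$; this holds for every $r\geq 1$, which is what makes the density in \eqref{property_measure} stabilize.

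I expect the only delicate points to be the correct translation of the adelic condition \eqref{eq:xy} into the unit condition on $x_1,x_2$, and the observation that one never has to solve the congruence for $x_0$ itself — this is exactly why the awkward factor $p^3$ dividing the coefficient of $x_0^2$ is harmless. For $p\mid y_1$ and $p\mid y_2$ the same argument applies verbatim to the binary forms $y_0^3x_0^2-y_2^3x_2^2$ and $y_0^3x_0^2+y_1^3x_1^2$ respectively; the extra minus sign in the latter makes it anisotropic exactly when $-y_0y_1$ is a non-residue, which accounts for the Legendre symbol $\left(\frac{-y_0y_1}{p}\right)$ appearing in the third case.
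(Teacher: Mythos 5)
Your proposal is correct and follows essentially the same route as the paper's own proof: reduce the congruence modulo $p$ (where the term with the $p$-divisible coefficient vanishes), count the nonzero solutions of the resulting binary quadratic relation via the relevant Legendre symbol, and lift each smooth mod-$p$ solution to exactly $p^{2(r-1)}$ solutions modulo $p^r$ by Hensel's lemma. You are somewhat more explicit than the paper about two points it leaves implicit — the translation of the condition $\min_i\{v_p(x_iy_i)\}=0$ into the requirement that $x_1$ or $x_2$ be a unit, and the non-vanishing of the gradient needed for the lifting count — but the argument is the same.
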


\begin{proof}
Suppose, for example, that $p$ divides $y_0$. In this case $p\nmid y_1y_2$.
Modulo $p$ we obtain the equation $y_1^3 x_1^2 \equiv y_2^3x_2^2 \bmod p$. 
If $y_1^{-3}y_2^3$ is a square modulo $p$, then we can choose $x_2$
arbitrarily in $\mathbf{F}_{p}^{\times}$ and for each choice of $x_2$
there are two solutions for $x_1$. It follows that there
are $2p(p-1)$  solutions modulo $p$ in this case. 
If $y_1^{-3}y_2^3$ is not a square modulo $p$, then there are no solutions.
We conclude that $N_{\y}^*(p)=(1+(\frac{y_1y_2}{p}))p(1-p)$. Using Hensel's lemma 
we deduce  that
$N_{\y}^*(p^r)$ is equal to $p^{2(r-1)}(1+(\frac{y_1y_2}{p}))p(1-p)$ for each
$r\geqslant 1$, which thereby completes the proof.  
\end{proof}

\begin{lemma}\label{lem:bad_place_2}
If $r\geqslant 3$, we have
$$
\frac{N_{\y}^*(2^r)}{2^{2r}}=\begin{cases} 
1, &\mbox{ if  $2\nmid y_0y_1y_2$ and $\neg\{y_0 \equiv y_1 \equiv -y_2 \bmod 4\}$},\\ 
2, &\mbox{ if $2\mid y_0$ and $y_1\equiv y_2 \bmod 8$},\\ 
2, &\mbox{ if $2\mid y_1$  and $y_0\equiv y_2 \bmod 8$},\\
2, &\mbox{ if $2\mid y_2$ and $y_0\equiv -y_1 \bmod 8$},\\ 
0, & \mbox{otherwise}. 
\end{cases}
$$
\end{lemma}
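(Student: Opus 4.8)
Write $F(\x)=y_0^3x_0^2+y_1^3x_1^2-y_2^3x_2^2$, so that $N_{\y}^*(2^r)$ counts the $\x\bmod 2^r$ with $F(\x)\equiv 0\bmod{2^r}$ satisfying the primitivity condition and the condition $\min_i v_2(x_iy_i)=0$, both of which depend only on the residues $x_i\bmod 2$. Since $y_0y_1y_2$ is squarefree, at most one $y_i$ is even, and I would treat the four cases (all $y_i$ odd, or exactly one of $y_0,y_1,y_2$ even) in parallel. The plan is to first show that the normalised count $N_{\y}^*(2^r)/2^{2r}$ is already constant for $r\geq 3$, reducing the lemma to an explicit count modulo $8$, and then to carry out that finite count.

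For the stabilisation I would exploit the translation $\x\mapsto \x+2^{r-1}\mathbf{e}_i$, where $\mathbf{e}_i$ is the $i$-th standard basis vector. Expanding the quadratic form gives
\[
F(\x+2^{r-1}\mathbf{e}_i)\equiv F(\x)\pm 2^{r}y_i^3x_i\bmod{2^{r+1}}\qquad(r\geq 3),
\]
the quadratic remainder $\pm 2^{2r-2}y_i^3$ being $\equiv 0\bmod{2^{r+1}}$ precisely because $2r-2\geq r+1$. This translation preserves every residue $x_j\bmod 2$, hence preserves both side conditions as well as the relation $F\equiv 0\bmod{2^r}$; and since $2^ry_i^3x_i\equiv 2^r\bmod{2^{r+1}}$ exactly when $x_i$ and $y_i$ are both odd, it toggles the value of $F$ between $0$ and $2^r$ modulo $2^{r+1}$ whenever that holds. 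The condition $\min_i v_2(x_iy_i)=0$ guarantees that every counted $\x$ has some coordinate with $x_i,y_i$ both odd; choosing the least such $i$ yields a fixed-point-free involution on the solution set modulo $2^r$ that exchanges $\{F\equiv 0\}$ and $\{F\equiv 2^r\}$ modulo $2^{r+1}$. Hence exactly half of the solutions modulo $2^r$ survive modulo $2^{r+1}$, and since each survivor lifts to all $8$ residues modulo $2^{r+1}$ (the cross term in $F(\x+2^r\mathbf{t})$ being divisible by $2^{r+1}$), one obtains $N_{\y}^*(2^{r+1})=4N_{\y}^*(2^r)$, i.e.\ $N_{\y}^*(2^r)/2^{2r}=N_{\y}^*(8)/64$ for all $r\geq 3$.

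It then remains to compute $N_{\y}^*(8)$. Here I would use $y^3\equiv y\bmod 8$ for odd $y$ and $y^3\equiv 0\bmod 8$ for even squarefree $y$, so that modulo $8$ any term attached to an even $y_i$ drops out, while $x_i^2\bmod 8$ equals $1$, $4$ or $0$ according as $x_i$ is odd, $\equiv 2\bmod 4$, or $\equiv 0\bmod 4$. Organising the count by the parities of the $x_i$, one checks in the all-odd case that a configuration with all three $x_i$ odd never solves $F\equiv 0\bmod 8$ (as $y_0+y_1-y_2$ is odd), that configurations with a single odd coordinate are likewise inert, and that the surviving two-odd configurations each contribute $32$ under congruence constraints whose sum depends only on the residues of $y_0,y_1,y_2$ modulo $4$; this yields $N_{\y}^*(8)\in\{0,64\}$, with the value $0$ occurring exactly when $y_0\equiv y_1\equiv -y_2\bmod 4$. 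In each ramified case the coordinate attached to the even $y_i$ becomes free modulo $8$ (a factor $8$), and the remaining two-variable congruence forces the stated relation modulo $8$ (for instance $y_1\equiv y_2\bmod 8$ when $2\mid y_0$, and $y_0\equiv -y_1\bmod 8$ when $2\mid y_2$, the sign reflecting the minus attached to the $x_2$-term), giving $N_{\y}^*(8)\in\{0,128\}$ and matching the tabulated values $2$ and $0$.

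I expect the conceptual crux to be the stabilisation step: the naive Hensel lift fails because $\partial_iF=\pm 2y_i^3x_i$ is divisible by $2$, so nonsingular points cannot simply be lifted, and the bound $r_0=3$ is exactly what makes the quadratic remainder negligible modulo $2^{r+1}$ and the toggling involution available. The subsequent congruence bookkeeping modulo $8$ is elementary but must be done carefully case by case; the one point worth emphasising is that the ``all coordinates odd'' configuration is always inert, which is precisely why the final answer depends only on the residues of the $y_i$ modulo $4$ (respectively modulo $8$ in the ramified cases).
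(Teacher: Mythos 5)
Your proof is correct and takes essentially the same route as the paper, which likewise disposes of the lemma by a direct calculation at $r=3$ together with a lifting argument for $r>3$ (the paper simply cites Hensel's lemma, which at $p=2$ must be used in its refined form since every partial derivative $\pm 2y_i^3x_i$ is even; your toggling involution is a correct, self-contained proof of exactly that stabilisation, and your mod-$8$ bookkeeping matches the tabulated values). Nothing further is needed.
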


\begin{proof}
This follows from direct calculation for the case $r=3$. The formula
for  $r>3$ follows from Hensel's lemma. 
\end{proof}

\subsection{Density at the infinite place}

It remains to consider the infinite place $v=\infty$. Let
\begin{align*}
D_1&=\left\{(y_0^3x_0^2,y_1^3x_1^2,y_2^3x_2^2)\in (\R\cap [-1,1])^3 : y_0^3x_0^2+y_1^3x_1^2=y_2^3x_2^2\right\}.
\end{align*}
Using \cite[Lemme 5.4.7]{Peyre}, we obtain
$$
\text{\boldmath{$\omega$}}_{H_{\y},\infty}
(C_\y(\R)^+)
=\frac{1}{2}\cdot \int_{D_1} \text{\boldmath{$\omega$}}_{L,\infty},
$$
where 
$$
\text{\boldmath{$\omega$}}_{L,\infty} = \frac{\d x_0 \d x_1}{2y_2^{\frac{3}{2}} \sqrt{y_0^3x_0^2+y_1^3x_1^2}}
$$
is the Leray form. 
Let $D_2=\{(x_0,x_1)\in (\R\cap[-1,1])^2: x_0^2+x_1^2\leqslant 1\}$. Then 
it follows that
\begin{equation}\label{infinite_place}
\begin{split}
\text{\boldmath{$\omega$}}_{H_{\y},\infty}
(C_\y(\R)^+)
&= \frac{1}{2}\cdot\frac{1}{(y_0y_1y_2)^{\frac{3}{2}}}\int_{D_2}\frac{1}{\sqrt{x_0^2+x_{1}^2}}\d x_0\d x_1 \\
&=\frac{\pi}{(y_0y_1y_2)^{\frac{3}{2}}}.
\end{split}
\end{equation}

\subsection{Conclusion}

Recall the definition \eqref{eq:tama} of the Tamagawa measure, in
which the convergence factors are given by
\eqref{convergence_factors}. 
Combining \eqref{L-function-explicit}, \eqref{good_places}, \eqref{property_measure}
with 
Lemma \ref{lem:bad_place_p} and \eqref{infinite_place} we deduce that 
$\text{\boldmath{$\omega$}}_{H_{\y}}(C_\y(\mathbb{A}_{\mathbf{Q}})^+)$
is equal to
$$
\frac{1}{(y_0y_1y_2)^{\frac{3}{2}}}\cdot 
\frac{8}{\pi}\cdot
\sigma_{2,\y}\cdot\prod\limits_{\substack{p \mid y_0\\p>2}}\frac{\left(1+\left(\frac{y_1y_2}{p}\right)\right)}{\left(1+\frac{1}{p}\right)}\cdot\prod\limits_{\substack{p \mid y_1\\p>2}}\frac{\left(1+\left(\frac{y_0y_2}{p}\right)\right)}{\left(1+\frac{1}{p}\right)}\cdot \prod\limits_{\substack{p \mid y_2\\p>2}}\frac{\left(1+\left(\frac{-y_0y_1}{p}\right)\right)}{\left(1+\frac{1}{p}\right)}, 
$$
where $\sigma_{2,\y}=\lim_{r\to \infty}
2^{-2r}N_{\y}^*(2^r)$ is given by Lemma \ref{lem:bad_place_2}.
Substituting this into the definition of the conjectural constant
\eqref{eq:constant},  and combining it with \eqref{eq:alpha}, 
we deduce from \eqref{eq:constant_2} that
\begin{equation}\label{eq:sonia}
\begin{split}
c=~&
\frac{1}{\pi}\cdot \sum_{\y\in \NN^3}\frac{\mu^2(y_0y_1y_2)}{(y_0y_1y_2)^{\frac{3}{2}}}
\cdot \sigma_{2,\y}
\\
&\times
\prod\limits_{\substack{p \mid y_0\\p>2}}\frac{\left(1+\left(\frac{y_1y_2}{p}\right)\right)}{\left(1+\frac{1}{p}\right)}\cdot\prod\limits_{\substack{p \mid y_1\\p>2}}\frac{\left(1+\left(\frac{y_0y_2}{p}\right)\right)}{\left(1+\frac{1}{p}\right)}\cdot \prod\limits_{\substack{p \mid y_2\\p>2}}\frac{\left(1+\left(\frac{-y_0y_1}{p}\right)\right)}{\left(1+\frac{1}{p}\right)}. 
\end{split}
\end{equation}

In the remainder of this section we shall attempt to simplify this expression, in order to facilitate a more accurate  numerical computation of it.   Writing 
$
S$ for the set of $\y\in \N^3$ for which $\mu^2(y_0y_1y_2)=1$, 
we can partition $S$ into  subsets
\begin{align*}
S_{-1}&=\{\y\in S : 2\nmid y_0y_1y_2\}, \quad
S_{i}=\{\y\in S : 2\mid y_i\},
\end{align*}
for $0\leq i\leq 2$. We then split \eqref{eq:sonia} into sums $c_i$
over $S_i$, 
for each $-1\leq i\leq 2$.  
To streamline the notation, we define
$$
\gamma(n)=\prod\limits_{p \mid n}\left(1+\frac{1}{p}\right)^{-1}
$$
and, for $a,b \in \N$ with $a,b$ squarefree and $b>1$ odd,  we set
$
(\frac{a}{b})_{*}=1$ if and only if
$(\frac{a}{p})=1$ 
for each $p\mid b$,
with the convention that $(\frac{a}{1})_{*}=1$.

We begin by examining $c_{-1}$, in which case $y_0$, $y_1$ and $y_2$ are all odd. 
We get
\begin{align*}
c_{-1}=~&
\frac{1}{\pi}\cdot\sum_{\substack{\y\in S_{-1}\\ \neg\{y_0 \equiv y_1 \equiv -y_2 \bmod 4\}}}\frac{\gamma(y_0y_1y_2)}{(y_0y_1y_2)^{\frac{3}{2}}}
\\
&\times \prod\limits_{p \mid y_0}\left(1+\left(\frac{y_1y_2}{p}\right)\right)\cdot\prod\limits_{p \mid y_1}\left(1+\left(\frac{y_0y_2}{p}\right)\right)\cdot \prod\limits_{p \mid y_2}\left(1+\left(\frac{-y_0y_1}{p}\right)\right).
\end{align*}
Substituting $d=y_0y_1y_2$, we obtain
$$
c_{-1}=\frac{1}{\pi}\cdot \sum_{\substack{d=1\\2\nmid
    d}}^{\infty}\frac{\mu^2(d)\cdot\gamma(d)\cdot2^{\omega(d)}}{d^{\frac{3}{2}}}\cdot\Delta_{-1}(d), 
$$
where  $\omega(d)$ denotes the number of distinct prime divisors of
$d$ and 
\begin{align*}
\Delta_{-1}(d)=~&
\#\left\{y_0y_1y_2=d :
\begin{array}{l}
\neg\{y_0 \equiv y_1 \equiv -y_2 \bmod 4\},
\\
\left(\frac{y_1y_2}{y_0}\right)_*=\left(\frac{y_0y_2}{y_1}\right)_*=\left(\frac{-y_0y_1}{y_2}\right)_*=1
\end{array}
\right\}.
\end{align*}
We next consider $c_0$, noting that $c_0=c_1=c_2$, by  symmetry. If $y_0$ is even, we set $y_0=2y_0'$, where $y_0'$ is odd. It then holds that 
\begin{align*}
c_{0}=~&
\frac{1}{\pi}\cdot\sum_{\substack{(y_0',y_1,y_2)\in S_{-1}\\ y_1\equiv y_2\bmod 8}}\frac{2\gamma(y_0'y_1y_2)}{(2y_0'y_1y_2)^{\frac{3}{2}}}\\
&\times
\prod\limits_{p\mid y_0'}\left(1+\left(\frac{y_1y_2}{p}\right)\right)\cdot\prod\limits_{p \mid y_1}\left(1+\left(\frac{2y_0'y_2}{p}\right)\right)\cdot
\prod\limits_{p\mid y_2}\left(1+\left(\frac{-2y_0'y_1}{p}\right)\right).
\end{align*}
Putting $d=y_0'y_1y_2$ we deduce as above that  
$$
c_0=\frac{1}{\pi}\cdot\sum_{\substack{d=1\\2\nmid d}}^{\infty}\frac{\mu^2(d)\cdot\gamma(d)\cdot2^{\omega(d)}}{d^{\frac{3}{2}}}\cdot \frac{\Delta_0(d)}{\sqrt{2}},
$$
where now 
$$
\Delta_0(d)=\#\left\{y_0'y_1y_2=d :
\begin{array}{l}
 y_1\equiv y_2 \bmod 8, \\
\left(\frac{y_1y_2}{y_0'}\right)_*=\left(\frac{2y_0'y_2}{y_1}\right)_*=\left(\frac{-2y_0'y_1}{y_2}\right)_*=1
\end{array}
\right\}.
$$
Bringing these expressions together 
in \eqref{eq:sonia}, we conclude that
\begin{equation}\label{eq:constant_program}
c=\frac{1}{\pi}\cdot \sum_{\substack{d=1\\2\nmid d}}^{\infty}\frac{\mu^2(d)\cdot\gamma(d)\cdot2^{\omega(d)}}{d^{\frac{3}{2}}}\cdot\left(\Delta_{-1}(d)+\frac{3}{\sqrt{2}}\Delta_{0}(d)
\right).
\end{equation}
One finds by numerical computation that 
$c=2.677539267$, which is accurate up to eight digits.

\section{The lower bound}\label{s:lower}

Let $C\subset \PP^2$ be a conic defined over $\QQ$ and let $H:C(\QQ)\rightarrow \RR_{\geq 0}$ be an exponential height function. Suppose that $C$ is defined by a non-singular  quadratic form
defined over $\ZZ$ with relatively prime coefficients all bounded in modulus by $M$.
A number of results in the literature are directed at estimating the counting function
$
N_{C,H}(P)=\#\{x\in C(\QQ): H(x)\leq P\},
$
as $P\rightarrow \infty$, with the outcome that there exist absolute constants $\delta, \psi>0$ such that 
\begin{equation}\label{eq:fmt}
N_{C,H}(P)=c_{H}(
C(\mathbb{A}_{\Q})) P+O(M^\psi P^{1-\delta}),
\end{equation}
where 
$c_{H}(
C(\mathbb{A}_{\Q}))$ is the constant predicted by Peyre \cite{Peyre}.
This is a special case of the work of Franke, Manin and Tschinkel \cite{fmt} on flag varieties $P\setminus G$,
with $G$ taken to be  the orthogonal group in three variables.  Typically the uniformity in $M$ is not actually recorded, but it transpires that the dependence on $M$ is at worst polynomial. 

We are now ready to establish Theorem \ref{t:lower}. 
For any choice of $\y$ there are clearly $O(1)$ rational points on $C_\y$  which correspond to a solution with $x_0x_1x_2=0$.  Beginning with \eqref{eq:as_conic} we deduce that 
$$
N_1(B)\geq  
\frac{1}{4}
\sum_{\substack{\y\in \NN^3\\ y_0,y_1,y_2\leq B^\theta}}\mu^2(y_0y_1y_2)   N_{C_\y, H_{\y}}^+ (B^{\frac{1}{2}})  +O(B^{3\theta}),
$$
for any $\theta \leq \frac{1}{3}$, where 
$ N_{C_\y, H_{\y}}^+$ is defined as for $ N_{C_\y, H_{\y}}$, but with the additional constraint that
$\gcd(x_0y_0,x_1y_1,x_2y_2)=1$.  Once taken in conjunction with the fact that $y_0y_1y_2$ is square-free and $\gcd(x_0,x_1,x_2)=1$, we see that the coprimality condition 
$\gcd(x_0y_0,x_1y_1,x_2y_2)=1$ on $C_\y$
 is  equivalent to demanding that 
$\gcd(x_i,x_j,y_k)=1$ for each permutation $\{i,j,k\}=\{0,1,2\}$.
Using the M\"obius function to remove these coprimality conditions gives
$$
N_{C_\y, H_{\y}}^+ (B^{\frac{1}{2}}) =
\sum_{k_0\mid y_0}
\sum_{k_1\mid y_1}
\sum_{k_2\mid y_2} \mu(k_0k_1k_2) 
N_{C_{\k,\y'}, H_{\k,\y'}} \left(\frac{B^{\frac{1}{2}}}{k_0k_1k_2}\right),
$$
where $y_i=k_iy_i'$ for $0\leq i\leq 2$, 
$C_{\k,\y'}$ is the conic 
$k_0y_0'^3x_0^2+k_1y_1'^3x_1^2=k_2y_2'^3x_2^2$ and 
$H_{\k,\y'}$ is defined as for $H_\y$ but with $y_i^3$ replaced by $k_iy_i'^3$, for $0\leq i\leq 2$.
The conic
$C_{\k,\y'}$ has an underlying quadratic form with coefficients of size at most $B^{3\theta}$.
Applying \eqref{eq:fmt} we conclude that 
$$
N_{C_\y, H_{\y}}^+ (B^{\frac{1}{2}}) = B^{\frac{1}{2}}
\sum_{k_0\mid y_0}
\sum_{k_1\mid y_1}
\sum_{k_2\mid y_2} \frac{\mu(k_0k_1k_2) }{k_0k_1k_2}
\cdot
c_{H_{\k,\y'}}(
C_{\k,\y'}(\mathbb{A}_{\Q}))
+O_\ve(B^{\frac{1-\delta}{2}+3\theta\psi+\ve}),
$$
for any $\ve>0$.  One finds that the main term here is precisely equal to 
$c_{H_{\y}}(
C_{\y}(\mathbb{A}_{\Q})^+) B^{\frac{1}{2}}$, in the notation of \S \ref{s:constant}.   
Noting that 
$$
\sum_{y\leq B^{\theta}} \frac{f(y)}{y^{\frac{3}{2}}}=
\sum_{y=1}^\infty \frac{f(y)}{y^{\frac{3}{2}}} +O(B^{-\frac{\theta}{2}+\ve}),
$$
for any arithmetic function $f$ satisfying $f(n)=O_\ve(n^\ve)$, 
we deduce that
$$
N_1(B)\geq  c B^{\frac{1}{2}}
+O(B^{3\theta})
+
O_\ve(B^{\frac{1-\delta}{2}+3\theta(1+\psi)+\ve})
+O_\ve(B^{\frac{1-\theta}{2}+\ve}),
$$
for any $\ve>0$.
We therefore conclude the proof of Theorem \ref{t:lower} by taking  $\theta$ to satisfy
the inequalities
$0<\theta<\frac{\delta}{6(1+\psi)}$.

\section{The upper bound}\label{s:upper}

The aim of this section is to prove Theorem \ref{t:upper}, for which 
our starting point is \eqref{eq:N1}.
In order to estimate $N_1(B)$ we will view the equation in two basic ways: either as a family of conics
or as a family of plane cubic curves.  The work of Heath-Brown \cite{annal} allows one to estimate rational points of bounded height on plane curves, uniformly in the coefficients of the underlying equation. We 
will invoke this theory through the prism of the first author's work \cite[Lemma~4.10]{ferran}, which yields the following bound for any integer $d\geq 2$.

\begin{lemma}\label{lem:ferran}
Let $\ma{c}\in \ZZ^{3}$ with $c_{1}c_{2}c_{3}\neq 0$ and pairwise coprime coordinates. Then we have 
$$
\#\left\{ \mathbf{z}\in \ZZ^{3}: 
\begin{array}{l}
\gcd(z_{1},z_{2},z_{3})=1, ~|z_{i}|\leq Z_{i},\\
c_{1} z_{1}^{d}+c_{2} z_{2}^{d}+c_{3}z_{3}^{d}=0
\end{array}
\right\}
\ll_{d}
\left(1+\frac{Z_{1}Z_{2}Z_{3}}{|c_{1}c_{2}c_{3}|^{\frac{2}{d}}}\right)^{\frac{1}{3}} d^{\omega(c_{1}c_{2}c_{3})}.
$$
\end{lemma}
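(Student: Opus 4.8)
The plan is to deduce this from Heath-Brown's $p$-adic determinant method \cite{annal}, in the weighted, coefficient-sensitive form worked out in \cite{ferran}. The starting observation is that the diagonal form $F(\mathbf z)=c_1z_1^d+c_2z_2^d+c_3z_3^d$ cuts out a smooth, hence geometrically irreducible, plane curve $C\subset\PP^2$: at a singular point one would need $\nabla F=(dc_1z_1^{d-1},dc_2z_2^{d-1},dc_3z_3^{d-1})=\0$, forcing $\mathbf z=\0$ because $c_1c_2c_3\neq 0$, so no projective singularity exists. Write $V=Z_1Z_2Z_3$ and $H=|c_1c_2c_3|$; the target is $\ll_d(1+V/H^{2/d})^{1/3}d^{\omega(H)}$, where the $1+$ simply registers the $O_d(1)$ solutions that remain once $V\le H^{2/d}$ and the determinant saving is exhausted.

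In the main regime $V>H^{2/d}$ I would localise the solutions modulo a prime $p$ chosen to balance the box against the coefficients, grouping them by their reduction on $C$. For each residue class, the engine of \cite{annal} produces an auxiliary form $G$ vanishing at all its points: one forms the matrix of values of a suitable family of monomials at those points, observes that its determinant is divisible by a large power of $p$ (the points being congruent modulo $p$), and compares this with the Archimedean bound furnished by Hadamard's inequality, in which the box sizes $Z_1,Z_2,Z_3$ enter through the monomial values $z_1^{e_1}z_2^{e_2}z_3^{e_3}$. Whenever a class is not already small the two estimates are incompatible unless $\det=0$, i.e.\ unless the class lies on such a $G$. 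Since $C$ is geometrically irreducible and $\deg G$ is controlled, $G$ cannot contain $C$, so B\'ezout bounds the contribution of each auxiliary curve; optimising the degree of $G$ and the modulus $p$ against one another then yields the count $\ll_d(V/H^{2/d})^{1/3}$. Here the exponent $\tfrac13$ reflects the three directions of the box, while the power $H^{2/d}$ records the saving measured against the intrinsic size of $C$, the exponent $2$ matching the dimension of the ambient $\PP^2$.

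The factor $d^{\omega(H)}$ comes from the primes $p\mid c_1c_2c_3$. As the $c_i$ are pairwise coprime, each such $p$ divides exactly one of them, say $p\mid c_3$; then $C$ reduces modulo $p$ to $c_1z_1^d+c_2z_2^d\equiv 0$ with $c_1,c_2\in\FF_p^\times$, which degenerates into at most $d$ lines $z_1=\zeta z_2$ with $\zeta^d\equiv-c_2/c_1$ (the primitive solutions with $p\mid z_1,z_2$ being forced onto $[0:0:1]$, which lies on all of them). Every solution thus lies on one of $\le d$ components of $\bar C$ at each bad prime, and restricting the determinant argument to these admissible components and multiplying the local choices inserts the advertised $d^{\omega(H)}$. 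When $V\le H^{2/d}$ the determinant bound $(V/H^{2/d})^{1/3}$ is itself $O_d(1)$ per admissible class, so the total is $\ll_d d^{\omega(H)}$, which is exactly what the ``$1+$'' encodes.

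The step I expect to be the main obstacle is the precise, uniform bookkeeping of the coefficient dependence: one must track exactly how $H=|c_1c_2c_3|$ enters the determinant estimate — equivalently, how to normalise $C$ and its monomial matrix — in order to land on the power $2/d$ rather than some neighbouring exponent, and one must verify throughout that the auxiliary curves never share a component with $C$, so that B\'ezout applies with a constant depending only on $d$. Both issues are exactly what is handled in \cite{annal} and \cite{ferran}; granted that machinery, the remaining ingredients — geometric irreducibility, the congruence count at the bad primes, and the final optimisation of parameters — are routine.
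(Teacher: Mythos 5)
The paper itself gives no proof of this lemma: it is imported verbatim from \cite[Lemma~4.10]{ferran}, whose proof rests on Heath-Brown's determinant method \cite{annal}. Your proposal correctly identifies that provenance, and your preliminary points are sound: the diagonal form defines a smooth, hence geometrically irreducible, plane curve, and pairwise coprimality ensures each bad prime divides exactly one $c_i$. So at the level of strategy you are following the same route as the source the paper cites.

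Judged as a proof sketch, however, there is a genuine gap, located exactly at the step you flag as the ``main obstacle'': your division of labour between the two factors in the bound cannot work. You assert that the bare determinant method --- auxiliary prime $p$, Hadamard versus $p$-power divisibility, B\'ezout, optimisation of $p$ and $\deg G$ --- produces $(1+V/H^{2/d})^{1/3}$, while the primes dividing $H=|c_1c_2c_3|$ contribute only the multiplicative $d^{\omega(H)}$ through a count of components of the reduction modulo $p$. But in the critical balanced regime $|c_1|Z_1^d\approx|c_2|Z_2^d\approx|c_3|Z_3^d$ the determinant method applied to $C$ in the box is blind to the \emph{sizes} of the $c_i$: the Hadamard bound involves only the $Z_i$, the number of residue classes on $C$ modulo the auxiliary prime is about $p$ regardless of the $c_i$, and only the support of $F$ (not its coefficients) affects the monomial bookkeeping. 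No optimisation of $p$ and $\deg G$ can then manufacture the saving $H^{2/d}$. That saving is an arithmetic phenomenon: every integral point satisfies $c_1z_1^d\equiv -c_2z_2^d \bmod{p^{v_p(c_3)}}$ for each $p\mid c_3$ (and symmetrically for $c_1,c_2$), and since there are $O_d(1)$ --- generically at most $d$ --- $d$-th roots modulo the \emph{full prime power}, the solutions are confined to $\ll_d d^{\omega(H)}$ sublattices of $\ZZ^3$ of determinant essentially $H$. Feeding this confinement into the point count (equivalently, extracting the extra divisibility of the monomial determinants by powers of the $c_i$) is what produces the factor $H^{2/d}$ and the factor $d^{\omega(H)}$ \emph{simultaneously}, and it is the only place where pairwise coprimality is genuinely used. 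Your bad-prime analysis works only modulo $p$, so it sees only the radical of $H$; as structured, your two mechanisms could never recover the full $|c_1c_2c_3|^{2/d}$ when the $c_i$ are not squarefree, even granting all of the machinery of \cite{annal} and \cite{ferran}.
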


We will also make use of the familiar bound 
$
\sum_{n\leq x}k^{\omega(n)}\ll x\log^{k-1}x,
$ 
which is valid for any $k\in \NN$. 
We  consider the contribution $N(\ma{X},\ma{Y})$, say, to $N_1(B)$
from $\ma{x}, \ma{y}$ such that  
$$
X_{i}\leq x_{i}<2X_{i}, \quad 
Y_{i}\leq y_{i}<2Y_{i}, 
$$
for $0\leq i\leq 2$.  Clearly $N(\ma{X},\ma{Y})=0$ unless $X_{i}^2Y_{i}^{3}\leq B$ and $X_{i},Y_{i}>1/2$, for $0\leq i\leq 2$. It will be convenient to set 
$X=X_{0}X_{1}X_{2}$ and $Y=Y_{0}Y_{1}Y_{2}$.  In particular we may henceforth assume that 
$
X^{2}Y^{3}\leq B^{3}.
$
On summing over dyadic intervals we see that 
\begin{equation}\label{eq:max}
N_1(B)\ll \log^{6}B \max_{\ma{X},\ma{Y}} N(\ma{X},\ma{Y}),
\end{equation}
where the maximum is over $\ma{X},\ma{Y}$ satisfying the above inequalities. 

Viewing the underlying equation as a family of conics first, we take
$d=2$ in Lemma \ref{lem:ferran} and deduce that 
\begin{align*}
N(\ma{X},\ma{Y})
&\ll \sum_{\y} 2^{\omega(y_{0}y_{1}y_{2})} \left(1+\frac{X}{Y^{3}}\right)^{\frac{1}{3}}\\
&\ll \left(Y+X^{\frac{1}{3}}\right)\log^{3}B. 
\end{align*}
Alternatively, regarding the equation as a family of cubics, we take $d=3$ in Lemma \ref{lem:ferran} and obtain
\begin{align*}
N(\ma{X},\ma{Y})
&\ll \sum_{\x} 3^{\omega(x_{0}x_{1}x_{2})} \left(1+\frac{Y}{X^{\frac{4}{3}}}\right)^{\frac{1}{3}}\\
&\ll \left(X+Y^{\frac{1}{3}}X^{\frac{5}{9}}\right)\log^{6}B. 
\end{align*}
Bringing these two estimates together we conclude that 
$$
N(\ma{X},\ma{Y})
\ll \left(\min \{X,Y\} + \min\{Y, Y^{\frac{1}{3}}X^{\frac{5}{9}}\}+X^{\frac{1}{3}} \right)\log^{6}B. 
$$
Now it is clear that $\min\{X,Y\}\leq X^{\frac{2}{5}}Y^{\frac{3}{5}}\leq B^{\frac{3}{5}}$ and 
$$
\min\{Y, Y^{\frac{1}{3}}X^{\frac{5}{9}}\}
\leq Y^{\frac{9}{25}}\cdot
(Y^{\frac{1}{3}}X^{\frac{5}{9}})^{\frac{18}{25}} =
X^{\frac{2}{5}}Y^{\frac{3}{5}}
\leq B^{\frac{3}{5}},
$$
since $X^2Y^3\leq B^{3}$. 
Finally  we note that 
$X^{\frac{1}{3}}\leq B^{\frac{1}{2}}$. Inserting our estimate for $N(\ma{X},\ma{Y})$ into \eqref{eq:max},  we therefore arrive at the statement of Theorem \ref{t:upper}.

\end{document}